\newtheorem{thm}{Theorem}
\newtheorem{lem}{Lemma}
\title{Eigenvalues of some classes of signed complete graphs}
\author{Prajnanaswaroopa S\\
sntrm4@rediffmail.com}
\date{sntrm4@rediffmail.com}
\begin{document}
\maketitle
\section*{Abstract}
In this work, we discuss some properties of the eigenvalues of some classes of signed complete graphs. We also obtain the form of characteristic polynomial for these graphs.
\section*{Introduction}
Signed graphs was introduced by Frank Harary \cite{HAR}. Signed graphs are becoming highly useful tools to analyze several networks in real life, typically social networks \cite{GAL}. A signed graph is a simple loopless graph with a function defined from the set of edges to the set $\{-1,1\}$. Spectrum of signed graphs have been discussed in \cite{ZAS1}, \cite{ZAS2}. Spectrum of complete signed graphs have been discussed in some length in \cite{AKB}. . Here, we find the eigenvalues of the adjacency matrix associated to the signed complete graph $G'$, where the negative edges induce a graph $G$ of order $n$ which consist of a union of $k$ cliques of order $h$ such that we have a $h-p$ clique common to all the $k$ cliques, and each of the $p$ vertices of each of the $k$ cliques are disjoint. In other words, if $G$ be the induced graph formed by the negative edges, and, if we label the vertices of the disjoint cliques as $v_{ij}$, $i\in\{1,2,\ldots,p\}$ and $j\in\{1,2,\ldots,k\}$, then the vertices $v_{ml}$, where, $m\in\{h-p+1,h-p+2,\ldots,h\}$ and $l\in\{1,2,\ldots,k\}$ form a clique in the graph $G$. We can call the induced graph as $G$ with parameters $n,h,p$. The adjacency matrix of such a graph using a suitable labelling of its vertices can be given by: 
$$\begin{pmatrix}
K_p&O&\ldots&O&X_p\\O&K_p&\ldots&O&X_p\\O&O&\ddots(k-1)-times&O&X_p\\O&O&\ldots&K_p&X_p\\X&O&\ldots&O&K_h
\end{pmatrix}$$
where $O$ is the zero matrix and $X_p$ are the first $p$ rows of a matrix $X$ given by 
$$X=\begin{pmatrix}
J&O&\ldots&O\\J&O&\ldots&O\\\vdots&\vdots&(k-1)-times&\vdots\\J&O&\cdots&O\\J&O&\ldots&O\end{pmatrix}$$
with $J_{p,(h-p)}$ being the all ones matrix of order $p\times (h-p)$ given by
$$J=\begin{pmatrix}
1&1&\ldots&1\\1&1&\ldots&1\\\vdots&\vdots&\vdots&\vdots\\1&1&\ldots&1
\end{pmatrix}
$$
As the adjacency matrix for a signed complete graph with negative edges inducing a graph $G$ is the same as the Siedel adjacency of the graph $G$; and, since the Siedel adjacency matrix of a graph with adjacency matrix $A$ of order $n$ is defined as $J_n-I_n-2A$, therefore, we get that, the matrix of which we wish to find the spectrum is 
$$\begin{pmatrix}
-K_{p}&J&\ldots&J&X'_{1}\\J&-K_{p}&\ldots&J&X'_{2}\\J&J&\ddots(k-1)-times&J&X'_{3}\\J&J&\ldots&-K_p&X'_{4}\\X'_{5}&J&\ldots&J&-K_h
\end{pmatrix}$$
with $X'_{i}$ being the $i$-th $p$ rows of the matrix $X'$ given in block form by:
$$X'=\begin{pmatrix}
-J_{(k-1)p,h-p}&J_{(k-1)p,p}\end{pmatrix}$$. We denote the main matrix by $S(G)$ with respect to parameters $n,h,p$.
\section*{Main Theorems}
\begin{lem}
The eigenvalues of the matrix $X=aI_n+bJ_n$ are given by $bn+a$ with multiplicity $1$ and $a$ with multiplicity $n-1$.
\end{lem}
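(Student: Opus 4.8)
The plan is to diagonalize $X = aI_n + bJ_n$ by exploiting the simple spectral structure of the all-ones matrix $J_n$. First I would observe that $J_n$ has rank one, since $J_n = \mathbf{1}\mathbf{1}^{\top}$ where $\mathbf{1}$ denotes the all-ones column vector of length $n$. A rank-one matrix has at most one nonzero eigenvalue, and since the trace of $J_n$ equals $n$, that nonzero eigenvalue must be $n$ with multiplicity one, forcing the remaining $n-1$ eigenvalues to be $0$. Concretely, $J_n \mathbf{1} = n\mathbf{1}$, so $\mathbf{1}$ is an eigenvector for the eigenvalue $n$, while any vector $v$ whose entries sum to zero satisfies $J_n v = 0$, yielding the eigenvalue $0$ on the $(n-1)$-dimensional orthogonal complement of $\mathbf{1}$.

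Next I would transfer this decomposition to $X$. Because the identity $I_n$ commutes with every matrix, any eigenvector of $J_n$ is automatically an eigenvector of $X$: if $J_n v = \lambda v$, then
$$X v = (aI_n + bJ_n)v = av + b\lambda v = (a + b\lambda)v.$$
Applying this to the two eigenspaces of $J_n$, the eigenvector $\mathbf{1}$ (with $\lambda = n$) produces the eigenvalue $a + bn$ of multiplicity one, and the $(n-1)$-dimensional kernel of $J_n$ (with $\lambda = 0$) produces the eigenvalue $a$ of multiplicity $n-1$. Since these account for all $n$ eigenvalues counted with multiplicity, the claim follows.

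I do not anticipate a genuine obstacle here, as the result is elementary; the only point needing care is justifying the multiplicities, which follows cleanly from the rank-one/trace argument for $J_n$. As an alternative route, one could compute the characteristic polynomial directly as $\det(X - \lambda I_n) = \det\bigl((a-\lambda)I_n + bJ_n\bigr)$ using the matrix determinant lemma, obtaining $(a - \lambda)^{n-1}(a + bn - \lambda)$, which exhibits the eigenvalues $a$ (multiplicity $n-1$) and $a + bn$ (multiplicity $1$) at once.
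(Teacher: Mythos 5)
Your proof is correct and follows essentially the same route as the paper's: both exploit the rank-one structure of $J_n$, identify the all-ones vector as the eigenvector for the nonzero eigenvalue, and use the fact that adding the scalar matrix $aI_n$ shifts every eigenvalue by $a$ without changing the eigenvectors. Your version is in fact slightly tighter on the multiplicity count (via the trace argument and the kernel description as vectors summing to zero), whereas the paper lists explicit spanning eigenvectors, but the underlying idea is identical.
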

\begin{proof}
The proof is quite straightforward. We observe that $bJ_n$ has rank $1$, and hence a single non-zero eigenvalue. We also observe that $(1\ 1\ 1\ldots\ 1^T$ is an eigenvector of $bJ_n$ with corresponding eigenvalue $bn$. The eigenvectors of $bJ_n$ are also eigenvectors of $X$, as $aI_n$ is a scalar matrix. Therefore, as the only eigenvalues of $aI_n$ with respect to any of its eigenvectors are $a$, the eigenvalues of $X$ are $a$ with multiplicity $n-1$ and $bn+a$  with multiplicity $1$. For more explicit clarity, we note that the eigenvectors of $X$ are $(1\ 1\ \ldots\ 1)^T$ with eigenvalue $nb+a$, and $ (1\ 0\ 0\ldots\ -1)^T,(0\ 1\ 0\ldots\ -1)^T\ldots,(0\ 0\ 0\ldots\ 0\ -1)^T$ each with eigenvalue $a$
\end{proof}
\begin{thm}
If $A$ is a $k$ order square matrix having constant row sum $r$ and having the same eigenvectors as $aI+bJ$. Then the eigenvalues of the matrix $M$ defined by
$$\begin{pmatrix}A&bJ_k&\ldots&\ bJ_k\\bJ_k&A&\ldots&bJ_k\\\vdots&\vdots&(n-times)&\vdots\\bJ_k&\ldots&\ldots&A\end{pmatrix}$$ are given by $r+bk(n-1)$ with multiplicity $1$, $r-bk$ with multiplicity $n-1$ and $d$ with multiplicity $k(n-1)$, where $d$ is the eigenvalue of $A$ with respect to the eigenvectors other than $(1\ 1\ 1\ldots\ 1)^T$.
\end{thm}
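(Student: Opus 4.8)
The plan is to exploit the block-circulant-like structure of $M$ together with the hypothesis that $A$ shares eigenvectors with $aI_k+bJ_k$. By Lemma 1, the common eigenvectors of $aI_k+bJ_k$ are the all-ones vector $\mathbf{1}=(1,1,\ldots,1)^T$ and the $k-1$ vectors of the form $e_i-e_k$ spanning the orthogonal complement of $\mathbf{1}$. Since $A$ has constant row sum $r$, we have $A\mathbf{1}=r\mathbf{1}$, so $\mathbf{1}$ is an eigenvector of $A$ with eigenvalue $r$; and on the complement of $\mathbf{1}$, the matrix $A$ acts with the single eigenvalue $d$ (this is the content of the hypothesis, matching the structure in Lemma 1 where all non-$\mathbf{1}$ eigenvectors share one eigenvalue). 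I would record these two facts first, since they are what makes the block structure diagonalize cleanly.

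First I would construct eigenvectors of $M$ of the form $w\otimes u$, where $w\in\mathbb{R}^n$ indexes the block rows and $u\in\mathbb{R}^k$ is one of the common eigenvectors. The key computation is that acting with $M$ on such a vector mixes the diagonal blocks $A$ and the off-diagonal blocks $bJ_k$. When $u=\mathbf{1}$, note $A\mathbf{1}=r\mathbf{1}$ and $bJ_k\mathbf{1}=bk\mathbf{1}$, so the block action reduces to the $n\times n$ matrix $r I_n + bk(J_n-I_n)$ acting on $w$; this is again of the form $aI_n+bJ_n$ with $a=r-bk$ and off-diagonal coefficient $bk$, so by Lemma 1 its eigenvalues are $r+bk(n-1)$ once (with $w=\mathbf{1}_n$) and $r-bk$ with multiplicity $n-1$. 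This yields the first two families of eigenvalues and accounts for $n$ of the $kn$ total.

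When $u$ is one of the $k-1$ eigenvectors orthogonal to $\mathbf{1}$, the crucial simplification is that $J_k u = 0$, so every off-diagonal block annihilates $u$ and the block action on $w\otimes u$ collapses to $A u$ in each block independently, giving $M(w\otimes u)=w\otimes(Au)=d\,(w\otimes u)$ for every choice of $w\in\mathbb{R}^n$. Hence each of the $k-1$ vectors $u$ produces $n$ independent eigenvectors (one for each standard basis $w$), contributing the eigenvalue $d$ with multiplicity $n(k-1)$. Adding the counts gives $1+(n-1)+n(k-1)=kn$, so the list is complete.

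The main obstacle, and the step I would treat most carefully, is verifying that $J_k u=0$ genuinely holds for \emph{all} the non-$\mathbf{1}$ common eigenvectors and that $A$ really reduces to the scalar $d$ on their entire span — in other words, that the hypothesis ``same eigenvectors as $aI+bJ$'' forces a single repeated eigenvalue $d$ on the $(k-1)$-dimensional complement rather than merely sharing an eigenbasis with possibly distinct eigenvalues. I would make this precise by invoking Lemma 1's explicit eigenvector description, then confirm the tensor-product eigenvectors I construct are linearly independent across the three families, which is immediate once one observes the three families live in $\mathbf{1}\otimes\mathbf{1}$, $(\text{complement})\otimes\mathbf{1}$, and $\mathbb{R}^n\otimes(\text{complement of }\mathbf{1}\text{ in }\mathbb{R}^k)$ respectively, and these subspaces are mutually orthogonal.
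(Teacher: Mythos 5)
Your proposal is correct and follows essentially the same route as the paper: both arguments construct the same three families of eigenvectors (the global all-ones vector, the block-constant difference vectors, and the non-$\mathbf{1}$ eigenvectors of $A$ placed in single block positions), with your Kronecker-product bookkeeping being merely a cleaner packaging of the paper's explicit lists and direct verification. One point worth noting: your multiplicity $n(k-1)$ for the eigenvalue $d$ is the correct count (since $1+(n-1)+n(k-1)=nk$), and the statement's claimed $k(n-1)$ is a typo --- indeed the paper's own proof exhibits exactly the $n(k-1)$ eigenvectors you describe while still quoting $k(n-1)$.
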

\begin{proof}
Taking the previous theorem as an inspiration, we can construct eigenvectors for $M$ as follows:
Let $\vec{j}_i$ denote the all ones vector of order $i$. Let the eigenvectors of $A$ except $\vec{j}_k$ be labelled as $e_1, e_2, \ldots, e_{k-1}$. Then, we have the eigenvectors of $M$ to be $\vec{j}_{kn},(\vec{j}_k\ 0\ \ldots\ -\vec{j}_k)^T,(0\ \vec{j}_k\ 0\ldots -\vec{j}_k)^T,\ldots,(0\ 0\ \ldots0\ -vec{j}_k)^T,(e_1\ 0\ 0\ \ldots0)^T,(e_2\ 0\ \ldots\ 0)^T,\ldots, (e_{k-1}\ 0\ \ldots\ 0)^T, (0\ e_1\ 0\ldots\ 0)^T,\ldots, (0\ 0\ \ldots\ e_{k-1})^T$. The corresponding eigenvalues would then be $r+(bkn-bk)=r+bk(n-1)$ with multiplicity $1$ (for eigenvector $\vec{j}_{kn}$), $r-bk$ with multiplicity $n-1$ (for eigenvectors
$(\vec{j}_k\ 0\ \ldots\ -\vec{j}_k)^T,(0\ \vec{j}_k\ 0\ldots -\vec{j}_k)^T,\ldots,(0\ 0\ \ldots0\ \vec{j}_k\ -vec{j}_k)^T$) and lastly the eigenvalue of $A$ corresponding to the eigenvectors $e_i$ with multiplicity $k(n-1)$. The vectors given above are actually eigenvectors can be easily verified by multiplication and by using the properties of $A$ and $bJ_k$.
\end{proof}

\begin{lem}
We have $X'J_h=J_hX'^T=(2p-h)J_h$, where $$X'=\begin{pmatrix}
-J_{(k-1)p,h-p}&J_{(k-1)p,p}\end{pmatrix}$$

\end{lem}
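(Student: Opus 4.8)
The plan is to exploit the fact that every row of $X'$ has an identical entry pattern, so that $X'$ has a constant row sum, and then to invoke the elementary observation that multiplying by an all-ones matrix simply reproduces the relevant row (or column) sums in every position. First I would read the structure of $X'$ directly off its block form: each of its $(k-1)p$ rows consists of $h-p$ entries equal to $-1$ (contributed by the block $-J_{(k-1)p,h-p}$) followed by $p$ entries equal to $+1$ (contributed by $J_{(k-1)p,p}$). Consequently the sum of the entries in any row of $X'$ equals $-(h-p)+p = 2p-h$, the same value for every row.

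Next I would compute $X'J_h$ entrywise. Since every column of $J_h$ is the all-ones vector, the $(i,j)$ entry of $X'J_h$ is $\sum_{l}(X')_{il}(J_h)_{lj} = \sum_{l}(X')_{il}$, which is precisely the row sum of the $i$-th row of $X'$, namely $2p-h$, independently of both $i$ and $j$. Hence $X'J_h$ is the constant matrix all of whose entries equal $2p-h$, i.e. it is $(2p-h)$ times the all-ones matrix of the appropriate shape.

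For the second identity I would argue symmetrically: the $j$-th column of $X'^T$ is the $j$-th row of $X'$, whose entries again sum to $2p-h$, and since every row of $J_h$ is the all-ones vector, the $(i,j)$ entry of $J_hX'^T$ equals this same column sum $2p-h$. Thus $J_hX'^T$ is likewise the constant matrix with every entry $2p-h$, and the two products therefore agree with $(2p-h)J_h$ as asserted.

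I expect no genuine mathematical obstacle here, as the whole computation reduces to recording the constant row sum of $X'$; the only point requiring care is the bookkeeping of dimensions. Since $X'$ is $(k-1)p \times h$, the product $X'J_h$ has shape $(k-1)p \times h$ while $J_hX'^T$ has shape $h \times (k-1)p$, so the two sides are equal only up to the obvious transposition, and the symbol $J_h$ on the right-hand side must be read as the all-ones matrix of whichever of these shapes is in force. Provided this mild abuse of the $J$ notation is tolerated, the stated equalities follow at once from the row-sum computation.
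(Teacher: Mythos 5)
Your proposal is correct and follows essentially the same route as the paper: both arguments observe that multiplying by an all-ones matrix reproduces the row (equivalently, column) sums of $X'$, and that every row of $X'$ sums to $-(h-p)+p=2p-h$. Your additional remark about the dimension mismatch --- that $X'J_h$ is $(k-1)p\times h$ while $J_hX'^T$ is $h\times(k-1)p$, so the stated equality and the symbol $J_h$ must be read with an implicit abuse of notation --- is a genuine point of care that the paper's own proof silently glosses over.
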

\begin{proof}
As $X'$ is a matrix having constant row and columns sums, the entries of the matrix $X'J_h$ will consist only be the row sum of $X'$. This is because $J_h$ consists of only $1$s. Similarly, the entries of the matrix $J_hX'^{T}$ consist only of the column sum of $X'^{T}$, which is equal to the row sum of $X'$. As any row of $X'$ equals $(-1)$ $h-p$ times and $1$ $p$ times, the row sum would be $p-(h-p)=2p-h$, from which the lemma follows.  
\end{proof}
\begin{lem}
Let $A$, $D$ be square matrices of arbitrary orders. The determinant of the block matrix
$$M=\begin{pmatrix}
A&B\\C&D\end{pmatrix}$$ with invertible matrix $D$
is given by $|M|=|D||A-CD^{-1}B|=\frac{||D|A-C\cdot adj(D)B|}{|D|^{n-1}}$, where $adj(D)$ is the adjugate (or adjoint) of $D$.
\end{lem}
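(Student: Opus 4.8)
The plan is to establish the first equality by the standard block-triangular (Schur complement) factorization of $M$, and then to obtain the adjugate form purely by algebraic substitution using $D^{-1} = \mathrm{adj}(D)/|D|$. Throughout, let $n$ denote the order of the block $A$.

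First I would exhibit $M$ as a product of three block matrices whose determinants are transparent. Writing $S = A - BD^{-1}C$ for the Schur complement of the invertible block $D$, one verifies by direct block multiplication that
\begin{equation*}
\begin{pmatrix} A & B \\ C & D \end{pmatrix} = \begin{pmatrix} I & BD^{-1} \\ O & I \end{pmatrix}\begin{pmatrix} S & O \\ O & D \end{pmatrix}\begin{pmatrix} I & O \\ D^{-1}C & I \end{pmatrix}.
\end{equation*}
The two outer factors are block-unitriangular, so each has determinant $1$; the middle factor is block-diagonal, so its determinant is $|S|\,|D|$. Taking determinants and invoking multiplicativity then yields $|M| = |D|\,|A - BD^{-1}C|$, which is the first claimed identity. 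Verifying the factorization is routine block arithmetic: multiplying the right two factors reproduces $C$ and $D$ in the bottom block-row, and left-multiplying by the first factor restores $A$ and $B$ on top.

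For the second equality I would substitute $D^{-1} = \mathrm{adj}(D)/|D|$ into the Schur complement, giving $A - BD^{-1}C = \bigl(|D|A - B\,\mathrm{adj}(D)\,C\bigr)/|D|$. Since this is an $n\times n$ matrix and $\det(\lambda N) = \lambda^{n}\det N$ for a scalar $\lambda$, pulling the factor $1/|D|$ out of the determinant contributes $|D|^{-n}$; combining this with the leading $|D|$ from the first equality produces $|M| = \bigl||D|A - B\,\mathrm{adj}(D)\,C\bigr|/|D|^{n-1}$, as desired.

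The step requiring the most care is the justification that a block-triangular matrix has determinant equal to the product of the determinants of its diagonal blocks, which underlies both the unitriangular factors and the block-diagonal middle factor; I would either cite this as a standard fact or derive it by a Laplace (cofactor) expansion along the zero block. The remaining delicate point is purely bookkeeping: tracking that the scalar exponent in $\det(\lambda N)$ is exactly the order $n$ of $A$, so that the final power of $|D|$ in the denominator comes out to precisely $n-1$, and writing the block products in the order dictated by the factorization.
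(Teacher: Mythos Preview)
Your proof is correct and follows essentially the same route as the paper: both arguments reduce to the Schur complement identity $|M|=|D|\,|A-BD^{-1}C|$ (you via an explicit block-LDU factorization, the paper via block Gaussian elimination citing \cite{SIL}), and then both obtain the adjugate form by substituting $D^{-1}=\mathrm{adj}(D)/|D|$ and pulling the scalar out of the $n\times n$ determinant. Your presentation is in fact tidier---the paper's displayed ``reduced matrix'' and the ordering $CD^{-1}B$ in the statement are minor slips that you have silently corrected to the standard $BD^{-1}C$.
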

\begin{proof}
Proof is by using the Schur complement of the matrix $D$ in the matrix $M$. In other words, when we perform Gaussian elimination of the block matrix $M$, knowing that $D$ is invertible, we obtain the reduced matrix as $$M=\begin{pmatrix} A-BD^{-1}C&O\\D^{-1}C&I\end{pmatrix}$$, where $I$ is the identity matrix of appropriate order, that is, the same order as that of $D$. Knowing that the determinant of the block matrix $$M=\begin{pmatrix} A-BD^{-1}C&O\\D^{-1}C&I\end{pmatrix}$$ is just the product of the diagonals \cite{SIL}, which, in this case, is $|D|\cdot|A-BD^{-1}C|$. The expression involving adjuagte follows by noting that $A^{-1}=\frac{adj(A)}{|A|}$. 
\end{proof}
\begin{lem}
If $A_n$ is the adjacency matrix of the complete graph $K_n$, then adjugate of $M=-A_n-\lambda I_n$ has the form 
$$\begin{pmatrix}
C_p(n-1)&(\lambda-1)^{n-2}&\ldots&(1-\lambda)^{n-2}\\
(1-\lambda)^{n-2}&C_p(n-1)&\ldots&(1-\lambda)^{n-2}\\\vdots&\vdots&\ddots&\vdots\\(1-\lambda)^{n-2}&\ldots&ldots&C_p(n-1)
\end{pmatrix}$$
, where $C_p(n)=(1-\lambda)^{n-1}(1-n-\lambda)$ is the characteristic polynomial of $M$ (or negative of adjacency matrix of the complete graph of order $n$).
\end{lem}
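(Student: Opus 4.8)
The plan is to exploit the algebraic structure of $M$ rather than to expand cofactors one at a time. First I would rewrite $M=-A_n-\lambda I_n$ using $A_n=J_n-I_n$, which gives $M=(1-\lambda)I_n-J_n$. Thus $M$ has the form $aI_n+bJ_n$ with $a=1-\lambda$ and $b=-1$, so by Lemma 1 its eigenvalues are $a+bn=1-n-\lambda$ (simple) and $a=1-\lambda$ (with multiplicity $n-1$). Multiplying them gives $\det M=(1-\lambda)^{n-1}(1-n-\lambda)=C_p(n)$, which confirms the characteristic polynomial and shows that $M$ is invertible for all but finitely many values of $\lambda$.

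The key identity I would use is $\operatorname{adj}(M)=(\det M)\,M^{-1}$, valid wherever $M$ is invertible. To invert $M$ I would invoke the standard inverse of a matrix of the form $aI_n+bJ_n$, namely $\tfrac{1}{a}I_n-\tfrac{b}{a(a+bn)}J_n$, which is verified directly using $J_n^2=nJ_n$. Substituting $a=1-\lambda$, $b=-1$ and then multiplying by $\det M$, the denominators cancel cleanly because $a+bn=1-n-\lambda$ coincides with the linear factor of $\det M$. This collapses to the closed form
$$\operatorname{adj}(M)=(1-\lambda)^{n-2}(1-n-\lambda)\,I_n+(1-\lambda)^{n-2}J_n.$$

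Reading off the entries finishes the computation. The diagonal entries are $(1-\lambda)^{n-2}(1-n-\lambda)+(1-\lambda)^{n-2}=(1-\lambda)^{n-2}(2-n-\lambda)=C_p(n-1)$, and every off-diagonal entry equals $(1-\lambda)^{n-2}$, which is exactly the claimed matrix. (The mixed appearance of $(\lambda-1)^{n-2}$ and $(1-\lambda)^{n-2}$ in the statement is a typographical slip; the correct common value is $(1-\lambda)^{n-2}$.)

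Finally, I would address the one genuine gap: the identity $\operatorname{adj}(M)=(\det M)M^{-1}$ was used only at those $\lambda$ where $M$ is invertible. Since every entry of $\operatorname{adj}(M)$ is a polynomial in $\lambda$ (each being, up to sign, an $(n-1)\times(n-1)$ minor of $M$), and the right-hand side above is also polynomial in $\lambda$ and agrees with it at infinitely many values, the two sides are identical as polynomials. I expect this polynomial-identity (continuity) justification to be the main point requiring care; the algebra itself is routine once the inverse of $aI_n+bJ_n$ is in hand. An alternative, fully elementary route is to compute the minors directly, noting that deleting row $i$ and column $i$ returns a matrix of the same $aI_{n-1}+bJ_{n-1}$ shape (giving $C_p(n-1)$), while for $i\neq j$ deleting row $j$ and column $i$ turns row $i$ into an all-$(-1)$ row; but evaluating that off-diagonal determinant is messier than the inverse-matrix argument.
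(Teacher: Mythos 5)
Your proposal is correct and follows essentially the same route as the paper: both write $M=(1-\lambda)I_n-J_n$, apply $\operatorname{adj}(M)=(\det M)\,M^{-1}$, compute the inverse (the paper via the Sherman--Morrison formula, you via the equivalent closed-form inverse of $aI_n+bJ_n$ verified with $J_n^2=nJ_n$), and multiply by $C_p(n)$ to read off diagonal entries $C_p(n-1)$ and off-diagonal entries $(1-\lambda)^{n-2}$. Your two refinements --- the polynomial-identity argument covering the values of $\lambda$ where $M$ is singular, and the observation that the $(\lambda-1)^{n-2}$ appearing in the statement is a typographical slip for $(1-\lambda)^{n-2}$ --- are both correct and tighten points the paper leaves implicit.
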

\begin{proof}
As $A_n=(1-\lambda)I_n-J_n$ is invertible, we again use the property that $adj(A)=|A|\cdot A^{-1}$. To calculate the inverse of $M$, we use Sherman-Morrison formula \cite{SHE}. By the formula, we have $(X+uv^T)^{-1}=X^{-1}-\frac{X^{-1}uv^TX^{-1}}{1+v^TX^{-1}u}$, where $u$ and $v$ are vectors and $1+v^TA^{-1}u\neq0$. Here, we can take $u=(1\ 1\ \ldots(n-times)\ 1)^T$ and $v=(-1\ -1\ \ldots(n-times)\ -1)^T$ so that $uv^T=-J_n$ and $X=(1-\lambda)I_n$. Then, we get
\[A_n^{-1}=\frac1{1-\lambda}I_n-\frac{\frac1{1-\lambda}I_nuv^T\frac1{1-\lambda}I_n}{1+\frac{-n}{1-\lambda}}\]
\[=\frac1{1-\lambda}I_n-\frac{\frac1{(1-\lambda)^2}(-J)}{\frac{(1-\lambda)-n}{1-\lambda}}\]
\[=\frac1{(1-\lambda)(1-\lambda-n)}(1-\lambda-n)I_n+J_n\]
This implies that the adjugate then becomes $|A_n|\cdot A_n^{-1}$
$=C_p(n)\frac1{(1-\lambda)(1-\lambda-n)}(1-\lambda-n)I_n+J_n\cdot $
$=(1-\lambda)^{n-2}(1-\lambda-n)I_n+(1-\lambda)^{n-2}J_n=(1-\lambda)^{n-2}((2-\lambda-n)-1)I_n+(1-\lambda)^{n-2}J_n$
$=(C_p(n-1)-(1-\lambda)^{n-2})I_n+(1-\lambda)^{n-2}$. This matrix, when expanded, at once gives the desired result.
\end{proof}
\begin{lem}
If $K_h$ denotes the adjacency matrix of the complete graph on $h$ vertices, then we have $X'\cdot adj(-K_h-\lambda I_h)\cdot X'^T=(((C_p(h-1)-(1-\lambda)^{h-2}))h+((2p-h)^2)(1-\lambda)^{h-2})J_{n-h}$, where $$X'=\begin{pmatrix}
-J_{(k-1)p,h-p}&J_{(k-1)p,p}\end{pmatrix}$$ as before.
\end{lem}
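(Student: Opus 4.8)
The plan is to exploit the explicit form of the adjugate obtained in the previous lemma and reduce the entire computation to two elementary matrix products. First I would rewrite $\mathrm{adj}(-K_h-\lambda I_h)$ in the compact form $\alpha I_h + \beta J_h$, where $\alpha = C_p(h-1)-(1-\lambda)^{h-2}$ and $\beta = (1-\lambda)^{h-2}$; this is precisely the intermediate expression $(C_p(n-1)-(1-\lambda)^{n-2})I_n+(1-\lambda)^{n-2}J_n$ derived in the proof of that lemma, specialized to $n=h$. With this substitution the target product splits linearly as
$$ X'\,\mathrm{adj}(-K_h-\lambda I_h)\,X'^T = \alpha\, X'X'^T + \beta\, X'J_hX'^T, $$
so it suffices to evaluate the two matrices $X'X'^T$ and $X'J_hX'^T$ on their own.

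Next I would compute each piece using the fact that every row of $X'$ is the same vector $v$, consisting of $h-p$ entries equal to $-1$ followed by $p$ entries equal to $1$. For the first product, every entry of $X'X'^T$ is the inner product $\langle v,v\rangle=(h-p)+p=h$, so $X'X'^T = hJ_{(k-1)p}$; recalling that the induced graph has order $n=h+(k-1)p$, this is exactly $hJ_{n-h}$. For the second product I would invoke the earlier lemma giving $X'J_h=(2p-h)J$, apply it once to obtain a constant $(2p-h)$ matrix, and then multiply on the right by $X'^T$, whose columns are again $v$; each resulting entry is $(2p-h)$ times the row sum $2p-h$ of $X'$, yielding $X'J_hX'^T=(2p-h)^2J_{n-h}$.

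Substituting these back gives $\alpha\,hJ_{n-h}+\beta(2p-h)^2J_{n-h}$, and inserting the values of $\alpha$ and $\beta$ produces exactly $\big((C_p(h-1)-(1-\lambda)^{h-2})h + (2p-h)^2(1-\lambda)^{h-2}\big)J_{n-h}$, as claimed. The computation is essentially routine, so the main (mild) obstacle is not any delicate estimate but the dimensional bookkeeping: one must track that $X'$ is $(k-1)p\times h$, that $(k-1)p=n-h$, and that both elementary products collapse to scalar multiples of the all-ones matrix $J_{n-h}$. Applying the compact form $\alpha I_h+\beta J_h$ of the adjugate consistently, and assembling the two products in the correct order, is all that the proof requires.
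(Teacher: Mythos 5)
Your proposal is correct and follows exactly the route the paper intends: its proof of this lemma is the one-line remark that the result follows by ``straight-forward multiplication and using Lemmas 2 and 4,'' which is precisely your decomposition of the adjugate as $\alpha I_h+\beta J_h$ (Lemma 4) combined with the row-sum identity $X'J_h=(2p-h)J$ (Lemma 2). Your write-up simply supplies the details---the computation of $X'X'^T=hJ_{n-h}$ and $X'J_hX'^T=(2p-h)^2J_{n-h}$ and the bookkeeping $(k-1)p=n-h$---that the paper leaves implicit.
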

\begin{proof}
The proof is straight-forward multiplication and using Lemmas $2$ and $4$.
\end{proof}
\begin{proof}
In this case, the matrix has the form
$$\begin{pmatrix}
-K_{n-h}&X\\X^{T}&-K_h
\end{pmatrix}$$
with $$X=\begin{pmatrix}v&J_{n-h,h-1}
\end{pmatrix}$$ and $v$ is the vector $(-1,-1,\ldots,-1)^T$.First, let us find the characteristic polynomial.
\end{proof}
\begin{thm}
The spectrum of the matrix $S(G)$ with parameters $n,h,p$ is given by the roots of the polynomial $F(\lambda)=(1-2p-\lambda)^{\frac{n-h}{p}-1}(1-\lambda)^{n-2-\frac{n-h}{p}}s$, where $s= - \lambda^3- (2h - n + 2p - 3)\lambda^2- (2h^2 - 2(h - 1)n + 2(h - 2)p - 4h + 3)\lambda + 2h^2 - (2h - 1)n - 2(2h^2 - 2hn - h + 1)p  - 2h+4(h - n)p^2 + 1$. In particular, it has eigenvalue of $1$ with multiplicity at least $n-h-\frac{n-h}{p}$, and $2p-1$ with multiplicity at least $\frac{n-h}{p}-1$.
\end{thm}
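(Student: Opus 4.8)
The plan is to compute the characteristic polynomial $F(\lambda)=\det\!\big(S(G)-\lambda I_n\big)$ by exploiting the two-block structure of $S(G)$, in which the last diagonal block is $-K_h$ while the other $\tfrac{n-h}{p}=k-1$ diagonal blocks are copies of $-K_p$ (recall $n-h=(k-1)p$). First I would write
\[
S(G)-\lambda I_n=\begin{pmatrix}P&X'\\ X'^{T}&R\end{pmatrix},\qquad R=-K_h-\lambda I_h,
\]
where $P$ is the $(n-h)\times(n-h)$ matrix whose diagonal blocks are $-K_p-\lambda I_p$ and whose off-diagonal blocks are $J_p$, and $X'$ is the coupling matrix from the excerpt. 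Since $R$ is invertible away from its eigenvalues, Lemma 3 (the Schur-complement determinant) gives $F(\lambda)=\det(R)\,\det\!\big(P-X'R^{-1}X'^{T}\big)$, so everything reduces to understanding the Schur complement $P-X'R^{-1}X'^{T}$.

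The decisive simplification is that $X'R^{-1}X'^{T}$ is a scalar multiple of the all-ones matrix. Writing $R^{-1}=\operatorname{adj}(R)/\det(R)=\operatorname{adj}(R)/C_p(h)$ and invoking Lemma 5, I obtain $X'\operatorname{adj}(R)X'^{T}=\beta J_{n-h}$ with $\beta=\big(C_p(h-1)-(1-\lambda)^{h-2}\big)h+(2p-h)^2(1-\lambda)^{h-2}$, hence $X'R^{-1}X'^{T}=\tfrac{\beta}{C_p(h)}J_{n-h}$. Therefore $P-X'R^{-1}X'^{T}$ is again block-structured: each diagonal block equals $(1-\lambda)I_p-\big(1+\tfrac{\beta}{C_p(h)}\big)J_p$ and each off-diagonal block equals $\big(1-\tfrac{\beta}{C_p(h)}\big)J_p$. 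Every diagonal block is of the pure form $aI_p+bJ_p$, so by Lemma 1 it has constant row sum and exactly the eigenvectors required, and the whole matrix is precisely of the type treated in Theorem 1, with block size $p$, number of blocks $k-1$, and off-diagonal coefficient $b=1-\tfrac{\beta}{C_p(h)}$.

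Applying Theorem 1 factors $\det\!\big(P-X'R^{-1}X'^{T}\big)$ into three pieces. The ``difference'' eigenvalue $r-bp$ is, after the $\tfrac{\beta}{C_p(h)}$ terms cancel, simply $1-2p-\lambda$, giving $(1-2p-\lambda)^{k-2}=(1-2p-\lambda)^{\frac{n-h}{p}-1}$; the interior eigenvalue of each diagonal block is $1-\lambda$, occurring once per non-constant block eigenvector and hence with multiplicity $(k-1)(p-1)$; and the ``sum'' eigenvalue $r+bp(k-2)$ contributes one further factor carrying the denominator $C_p(h)$. Multiplying back by $\det(R)=C_p(h)$ clears this denominator: using $C_p(h)=(1-\lambda)^{h-1}(1-h-\lambda)$ together with the identity $C_p(h-1)-(1-\lambda)^{h-2}=(1-\lambda)^{h-2}(1-h-\lambda)$, the combination $C_p(h)\big(r+bp(k-2)\big)$ factors as $(1-\lambda)^{h-2}$ times a cubic in $\lambda$, which after substituting $k-1=\tfrac{n-h}{p}$ is exactly $s$. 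Collecting the $(1-\lambda)$ powers gives $(k-1)(p-1)+h-2=n-2-\tfrac{n-h}{p}$, producing precisely the claimed form of $F(\lambda)$, and the degree count $\big(\tfrac{n-h}{p}-1\big)+\big(n-2-\tfrac{n-h}{p}\big)+3=n$ confirms no factor is lost.

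I expect the bookkeeping of the $(1-\lambda)$ powers and the cancellation against $C_p(h)$ to be the main obstacle, since the cubic $s$ only emerges after combining $\det(R)$ with the $r+bp(k-2)$ factor and verifying the stated identity for $C_p(h-1)-(1-\lambda)^{h-2}$. As a safeguard, the two ``in particular'' multiplicities admit a direct and independent check. For eigenvalue $1$, any vector supported on a single $-K_p$ block and orthogonal to $\vec{j}_p$ is fixed by that block while being annihilated by every off-diagonal $J_p$ and by the coupling $X'^{T}$, yielding $(k-1)(p-1)=n-h-\tfrac{n-h}{p}$ independent eigenvectors. For the factor $1-2p-\lambda$, the vectors $(c_1\vec{j}_p,\dots,c_{k-1}\vec{j}_p,\vec{0}_h)$ with $\sum_i c_i=0$ are eigenvectors with eigenvalue $1-2p$ (the root of that factor), giving $\tfrac{n-h}{p}-1$ of them. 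These explicit constructions both cross-check the factorisation and directly establish the stated ``at least'' bounds.
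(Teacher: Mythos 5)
Your proposal is correct and follows essentially the same route as the paper: split off the $-K_h$ block, apply the Schur-complement determinant of Lemma 3 together with Lemma 5 to turn the problem into a matrix of exactly the form handled by Theorem 1, then recombine with $C_p(h)=(1-\lambda)^{h-1}(1-h-\lambda)$ to extract the factors $(1-2p-\lambda)^{\frac{n-h}{p}-1}$, $(1-\lambda)^{n-2-\frac{n-h}{p}}$ and the cubic $s$; your only deviations are cosmetic (using $R^{-1}$ and multiplying by $\det(R)$ at the end instead of carrying adjugate factors throughout) plus the explicit eigenvector verification, which the paper omits. One remark: your eigenvector computation correctly yields the eigenvalue $1-2p$, the actual root of the factor $(1-2p-\lambda)$, which is consistent with $F(\lambda)$ and quietly corrects the sign slip ``$2p-1$'' in the theorem's wording.
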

\begin{proof}
The Siedel matrix $S(G)$ is given by:
$$\begin{pmatrix}
-K_{p}&J&\ldots&J&X'_{1}\\J&-K_{p}&\ldots&J&X'_{2}\\J&J&\ddots(k-1)-times&J&X'_{3}\\J&J&\ldots&-K_p&X'_{4}\\X'_{5}&J&\ldots&J&-K_h
\end{pmatrix}$$, with $X'_{i}$ being the $i$-th $p$ rows of the matrix $X'$ given in block form by:
$$X'=\begin{pmatrix}
-J_{(k-1)p,h-p}&J_{(k-1)p,p}\end{pmatrix}$$. We proceed to calculate the characteristic polynomial of the matrix $S(G)$. This is nothing but the determinant of the matrix $S(G)-\lambda I_n$. In matrix form, this is
$$\begin{pmatrix}
-K_{p}-\lambda I_p&J&\ldots&J&X'_{1}\\J&-K_{p}-\lambda I_p&\ldots&J&X'_{2}\\J&J&\ddots(k-1)-times&J&X'_{3}\\J&J&\ldots&-K_p-\lambda I_p&X'_{4}\\X'_{5}&J&\ldots&J&-K_h-\lambda I_h
\end{pmatrix}$$.
By using Lemma $3$, the determinant can be written as:
$\frac{|-K_h-\lambda I_h|M-X'\cdot adj(-K_h-\lambda I_h)X'^T|}{|-K_h-\lambda I_h|^{n-1}}$, where $M$ is the block matrix of the first $n-h$ rows and columns given by 
$$\begin{pmatrix}
-K_{p}-\lambda I_p&J&\ldots&J\\J&-K_{p}-\lambda I_p&\ldots&J\\J&J&\ddots(k-1)-times&J\\J&J&\ldots&-K_p-\lambda I_p
\end{pmatrix}$$.
Taking cognizance of the fact that $|-K_h-\lambda I_h|=C_p(h)$ and $X'\cdot adj(-K_h-lambda I_h)\cdot X'^T=(1-\lambda)^{n-2}(1-\lambda-n)I_n+(1-\lambda)^{n-2}J_n$ from Lemma $5$, we get the determinant as $\frac{|C_p(h)M-((1-\lambda)^{n-2}(1-\lambda-n)I_n+(1-\lambda)^{n-2}J_n)|}{(C_p(h))^{n-h-1}}$. We take $Y=(1-\lambda)^{n-2}(1-\lambda-n)I_n+(1-\lambda)^{n-2}$. Then, the determinant becomes, in block form:

\begin{center}
\[\frac{1}{(C_p(h))^{n-h-1}}\left|\begin{smallmatrix}
C_p(h)[-K_{p}-\lambda I_p]-YJ_p&J_p(C_p(h)-Y)&\ldots&J_p(C_p(h)-Y)\\J_p(C_p(h)-Y)&C_p(h)[-K_{p}-\lambda I_p-YJ_p]&\ldots&J_p(C_p(h)-Y)\\J_p(C_p(h)-Y)&J_p(C_p(h)-Y)&\ddots(k-1)-times&J_p(C_p(h)-Y)\\J_p(C_p(h)-Y)&J_p(C_p(h)-Y)&\ldots&C_p(h)[-K_p-\lambda I_p-YJ_p]
\end{smallmatrix}\right|\]
\end{center}
The above matrix has a similar form to the one in Theorem $1$, with $A=Cp(h)[-K_p-\lambda I_p]-YJ_p$ and $b=Cp(h)-Y$. Therefore, as the eigenvalues are $(C_p(h)(n-h-2p+1-x)+Y(h-n))$ with multiplicity $1$, $(C_p(h)(1-2p-x))$ with multiplicity $\frac{n-h}{p}$ the determinant will be equal to $\frac1{(C_p(h))^{n-h-1}}[(C_p(h)(n-h-2p+1-x)+Y(h-n))(C_p(h)(1-2p-x))^{\frac{n-h}{p}-1}(C_p(h)(1-x))^{n-h-\frac{n-h}{p}}]$. Simplifying the expression using the form of $C_p(h)=(1-\lambda)^{h-1}(1-\lambda-h)$, we get $F(\lambda)=(1-2p-\lambda)^{\frac{n-h}{p}-1}(1-\lambda)^{n-2-\frac{n-h}{p}}s$, where $s= - \lambda^3- (2h - n + 2p - 3)\lambda^2- (2h^2 - 2(h - 1)n + 2(h - 2)p - 4h + 3)\lambda + 2h^2 - (2h - 1)n - 2(2h^2 - 2hn - h + 1)p  - 2h+4(h - n)p^2 + 1$. The expression $F(\lambda)$ is therefore the characteristic polynomial of $S(G)$. Therefore, roots of the cubic polynomial $s$ will fully determine the spectrum of $G$, as the eigenvalues $1$ and $2p-1$ are already known with their minimum multiplicities $n-2-\frac{n-h}{p}$ and $\frac{n-h}{p}-1$ from the expression.
\end{proof}
\section*{Conclusion}
In this paper, we have used the block matrix technique, Sherman-Morrison formula and eigenvector reconstruction method to compute the spectrum and characteristic polynomial of certain signed complete graphs. The method as such can have long-lasting applications in spectral graph theory and spectrum of the signed complete graphs can also be used further for various applications.

\end{document}